\def\:{\thinspace:\thinspace}
\newtheorem{theo}{Theorem}
\newtheorem{lemma}[theo]{Lemma}
\newtheorem{prop}[theo]{Proposition}
\newtheorem{cor}[theo]{Corollary}
\newtheorem{defi}[theo]{Definition}
\newtheorem{rem}[theo]{Remark}
\numberwithin{equation}{section}
\numberwithin{theo}{section}
\def\:{\thinspace:\thinspace}
\theoremstyle{definition}
\numberwithin{theo}{section}
 \def\mP{\mathsf{P}}
\newcommand{\ud}{\,\mathrm{d}}
\newcommand{\e}{\,\mathrm{e}}
\DeclareMathOperator{\dist}{dist}
\newcommand{\Graph}{\mathcal{G}}						
\newcommand{\R}{\mathbb{R}}						
\newcommand{\N}{\mathbb{N}}						
\newcommand{\K}{\mathbb{K}}						
\newcommand{\one}{\mathbbm{1}}
\title{Ornstein--Uhlenbeck Semigroups on Star Graphs}
\author{Delio Mugnolo}
\address{Delio Mugnolo, Lehrgebiet Analysis, Fakult\"at Mathematik und Informatik, Fern\-Universit\"at in Hagen, D-58084 Hagen, Germany}
\email{delio.mugnolo@fernuni-hagen.de}
\author{Abdelaziz Rhandi}
\address{Abdelaziz Rhandi, Dipartimento di Matematica, Università degli Studi di Salerno, Via Giovanni Paolo II, 132, 84084 Fisciano (SA), Italy}
\email{arhandi@unisa.it}
\thanks{The work of D.M.\ was supported by the Deutsche Forschungsgemeinschaft (Grant 397230547). \\
AR is member of G.N.A.M.P.A. of the Italian Istituto Nazionale di Alta Matematica (INdAM).\\
This work was started while AR visited the University of Hagen. He wishes
to express his gratitude to the University of Hagen for the financial support.\\
This article is based upon work from COST Action 18232 MAT-DYN-NET, supported by COST (European Cooperation in Science and Technology), \url{www.cost.eu}.}
\subjclass{47D06, 34B45, 35J15, 35K10}
\keywords{Ornstein--Uhlenbeck operators; Harmonic oscillator; 
Infinite metric graphs; $C_0$-semigroups; Discrete spectrum}
\dedicatory{Dedicated to the memory of Rosa Maria Mininni}
\begin{document} 
\maketitle
\begin{abstract}
We prove first existence of a classical solution to a class of parabolic problems with unbounded coefficients on metric star graphs subject to Kirchhoff-type conditions. The result is applied to the Ornstein--Uhlenbeck and the harmonic oscillator operators on metric star graphs. We give an explicit formula for the associated Ornstein--Uhlenbeck semigroup and give the unique associated invariant measure. We show that this semigroup inherit the regularity properties of the classical Ornstein--Uhlenbeck semigroup on $\R$. 
\end{abstract}


\maketitle
\section{Introduction}
The aim of this note is to present some preliminary results in the study of elliptic operators with unbounded coefficients on non-compact metric graphs. For the sake of simplicity, we here restrict first and foremost to the case of graphs with the simplest possible topology, i.e., metric star graphs $\mathcal S_m$ consisting of $m<\infty$ halflines; and to the best understood class of operators with unbounded coefficients, viz the Ornstein--Uhlenbeck operators: these are, on $\R$, the operators associated with the Ornstein--Uhlenbeck stochastic process, i.e.,
they are defined by
\begin{equation}\label{eq:opou-R}
 A f(x)= \frac{1}{2}f''(x) -xf'(x), \qquad x\in \R.
\end{equation}

The theory of second-order differential operators on \textit{compact} metric graphs $\mathcal G$ is classical and goes back to Lumer~\cite{Lum80,Lum80b} and Pavlov--Faddeev~\cite{PavFad83}. Shortly afterwards, Roth \cite{Rot84} presented an explicit formula for the heat kernel -- the integral kernel of the semigroup generated by the plain Laplacian with natural (i.e., continuity and Kirchhoff-type) boundary conditions; this was later extended to more general vertex conditions~\cite{Nic85}, to (possibly) infinite equilateral graphs~\cite{Cat99}, and recently to (possibly) infinite graphs of bounded geometry \cite{BecGreMug21}, to which the case of $\mathcal S_m$ can be reduced by elementary arguments. It is known that, just like its counterpart on $\R$, this semigroup can be associated with the Brownian motion on $\Graph$~\cite{KosPotSch12}.
 Qualitative properties of differential operators of order two \cite{HusMug20} and three \cite{MugNojSei17} on $\mathcal S_m$ have been recently studied, too; metric graphs including semi-infinite intervals appear in the study of linear scattering theory (\cite[Section~5.4]{BerKuc13} and references therein) and nonlinear Schrödinger equations, ever since~\cite{Noj14}.
In virtually all of these investigations, the relevant functional setting is the Hilbert space $L^2(\mathcal G)$ with respect to the measure on $\Graph$ canonically induced by the Lebesgue measure on each halfline $\R_+$. As usual in the theory of  operators with unbounded coefficients, we will instead introduce an appropriate measure adapted to our setting; this will turn out to be the invariant measure for the Ornstein--Uhlenbeck semigroup.

Let us now describe our main results and the structure of the paper. In Section~\ref{sec:gen-setting} we will recall the basic notions in the theory of metric graphs and introduce the relevant function spaces. In Section~\ref{sec:operators} we will introduce the class of operators we are going to study and prove that they drive well-posed evolution equations on $\mathcal S_m$. In a certain sense, our approach here is similar to that of~\cite{BecGreMug21}: our proofs are based on a kind of transference principle, as we extend the properties of the semigroup's integral kernel on $\R$ to explicitly define the integral kernel of the semigroup on $\mathcal S_m$, thus proving existence of a classical solution for a certain class of initial data.
In this article, this is done by making good use of the symmetries of the metric star graph and invariance properties of the semigroup on $\R$. This strategy can probably be pursued in greater generality, as long as the existence of an integral kernel for the relevant semigroup is known.

 In Section~\ref{sec:functional} we turn to the issue of studying the operator theoretical properties of the semigroup associated with this integral kernel. To this purpose, we focus on an especially interesting special case
and study its realizations in the space of bounded continuous functions as well as on Lebesgue spaces, either with respect to the Lebesgue measure or to a suitable alternative measure -- the \textit{invariant measure} associated with the Ornstein--Uhlenbeck process on $\mathcal S_m$. In this way, we are especially able to prove the existence of a consistent family of analytic, positive, compact Ornstein--Uhlenbeck semigroups on $L^p_\mu$-spaces; we can also determine their spectra.
 
Finally, as an application of our results,  in Section~\ref{sec:harmonic} we briefly discuss the behavior of the harmonic oscillator on a metric star graph; indeed, it is already well-known that on $\R$ the relevant Hamiltonian is similar to the Ornstein--Uhlenbeck operator on $L^2_\mu(\R)$, and in particular they have equal spectrum. To our knowledge, the properties of this physical model on graphs have never been studied in the literature; although we wish to mention a well-known model of irreversible quantum graphs due to Smilansky and Solomyak that boils down to coupling a Laplacian on ${\mathcal G}$ with a harmonic oscillator on $\R$~\cite{Smi03,Sol03}, thus defining an operator on $L^2(\Graph)\oplus L^2(\R)\simeq L^2(\Graph\times \R)$.

\section{General setting}\label{sec:gen-setting}

Object of our investigations here is a metric star graph, ${\mathcal S}_m$, with $m$ rays of semi-infinite length, $m \in \N$; i.e., ${\mathcal S}_m$ is the quotient space 
\[
\bigsqcup_{i=1}^m \faktor{[0,\infty)}{\sim}
\] 
that consists of $m$ disjoint half-lines $[0,\infty)$ whose origins are identified with one common zero point, $0$.

Here $\sim$ is the equivalence relation: $(x,i)\sim (y,j)$ if and only if either $x=y$ and $i=j$, or else $x=0$ and $y=0$, regardless of $i,j$.  Accordingly, we use the notations $0 := (0,i)$ for all $i$, as well as $x_i := (x,i)$ and $|x_i| := x$ whenever $x>0$.
We refer to~\cite{Mug19} for a more detailed description of this formalism that, in particular, allows us to extend to a metric graph any metric-measure structure supported on individual (semi-infinite) intervals.

We denote by ${\mathcal S}_m^n$, $n > 0$ the \emph{truncated star} defined by 
\[
{\mathcal S}_m^n:=\bigsqcup_{i=1}^m \faktor{[0,n]}{\sim},
\]
with the equivalence relation $\sim$ defined as above.

It is well known that $\mathcal S_m$ is a metric-measure space with respect to the metric-measure structure induced edgewise by the Euclidean distance and the Lebesgue measure. As already pointed out in the introduction, along with the Lebesgue measure there is another canonical measure that endows $L^p$-spaces when studying the Ornstein--Uhlenbeck operator; it is given by 
\begin{equation} \mu(\ud x)=\frac{2}{m\sqrt{\pi}}\e^{-|x|^2}\ud x.
\end{equation} 
Because $\ud \mu$ is absolutely continuous with respect to the Lebesgue measure $\ud x$, one sees that $\ud \mu$ is locally finite with respect to the Euclidean distance, too: we conclude that $\Graph$ is a metric-measure space with respect to the path metric and the direct sum measure induced by the measure $\ud \mu$. Clearly, also $\mathcal S^n_m$ are metric-measure spaces. In particular, this allows us to consider, without ambiguity, functions spaces based on topological and measure-theoretical notions: in particular, the spaces $C_b(\mathcal S_m)$ (resp., $BUC(\mathcal S_m)$) of bounded (resp., bounded uniformly continuous) functions on $\mathcal S_m$; and the Lebesgue spaces $L^p(\mathcal S_m)$
(resp., $L^p_\mu(\mathcal S_m)$) with respect to the Lebesgue measure (resp., to the measure $\mu$). Likewise, one defines the Sobolev space $W^{1,p}(\mathcal S_m)$ (resp., $W^{1,p}_\mu(\mathcal S_m)$) as the space of functions in $C(\mathcal S_m)\cap L^p(\mathcal S_m)$ (resp., $C(\mathcal S_m)\cap L^p_\mu(\mathcal S_m)$) that are weakly differentiable with a weak derivative in $L^p(\mathcal S_m)$ (resp., $L^p_\mu(\mathcal S_m)$). By definition of $\mathcal S_m$ as a disjoint union, any function $f:\mathcal S_m\to \K$ can be equivalently regarded as a family $(f_i)_{1\le i\le m}$, where $f_i:\R_+\to \K$.

In particular, if $f\in C(\mathcal S_m)$, then in agreement with the above convention we write $f(0):=\lim\limits_{x \to 0}f_i(x)$, $1\le i\le m$.

\section{Operators with unbounded coefficients on metric star graphs}\label{sec:operators}

We want to study first the Kolmogorov operator
\begin{equation}\label{eq:def-L} 
L f(x_i)= q(|x_i|)f''(x_i) +b(|x_i|)f'(x_i)+c(|x_i|)f(x_i), \qquad |x_i|\ge 0, \quad  i=1,\ldots, m, 
\end{equation}                
on $C_b({\mathcal S}_m)$, where $q,b,c \in C^\alpha_{\hbox{\scriptsize loc}}([0,\infty))$ for some $\alpha \in (0,1)$, $b(0)=0$, $q(x) > 0$ for all $x \in [0,\infty)$ and $\sup c\le c_0$ for some $c_0\in \R$. We equip it with continuity along with Kirchhoff-type condition in zero by defining it on the domain
\begin{equation*} D(L)=\{f \in C_b({\mathcal S}_m) \cap  \bigcap_{1\leq p < \infty} \widetilde{W^{2,p}_{\hbox{\scriptsize loc}}}({\mathcal S}_m):  \sum_{i=1}^m f'(0_i)=0 \hbox{ and } Lf \in C_b({\mathcal S}_m)\},
\end{equation*}
where
\begin{equation}\label{eq:sobol-defin1}
\widetilde{W^{k,p}_{\hbox{\scriptsize loc}}}({\mathcal S}_m):=\bigoplus_{i=1}^m W^{k,p}_{\hbox{\scriptsize loc}}(\R_+), \qquad k\in \mathbb N.
\end{equation}
 (Note that, unlike for $W^{1,p}({\mathcal S}_m)$ defined in Section~\ref{sec:gen-setting}, we are \textit{not} imposing continuity at 0 on the functions in $\widetilde{W^{k,p}_{\hbox{\scriptsize loc}}}({\mathcal S}_m)$.)
We associate with the operator $L$ a further operator $\tilde{L}$, acting on the function space $C_b(\R)$, defined by 
\begin{equation*} \tilde{L} f(x) = \tilde{q}(x)f''(x) + \tilde{b}(x)f'(x)+\tilde{c}(x)f(x) \end{equation*}
with domain 
\begin{equation*} D(\tilde{L})=\{f \in C_b(\R) \cap  \bigcap_{1\leq p < \infty} W^{2,p}_{\hbox{\scriptsize loc}}(\R): \tilde{L}f \in C_b(\R)\}, 
\end{equation*}
where 
\begin{eqnarray}\label{eq:extention-q}
& & \tilde{q}(x)=q(x),\,\tilde{b}(x)=b(x),\,\tilde{c}(x)=c(x) \hbox{\ if } x \geq 0\,\hbox{\ and } \nonumber \\
& & \tilde{q}(x)=q(-x), \, \tilde{b}(x)=-b(-x),\,\tilde{c}(x)=c(-x) \hbox{\ if } x \leq 0.
\end{eqnarray} 

In this section we are mainly interested in the existence and uniqueness of solutions to the parabolic problem
\begin{equation}\tag{${\rm P}_\Lambda$} 
\left\{ 
\begin{split}
\partial_t u(t,\cdot)  &= \Lambda u(t,\cdot), \quad t > 0, \\
u(0,\cdot)  & = f(\cdot), 
\end{split}
 \right.\end{equation}
where the subscript in $(P_\Lambda)$ always indicates which operator $\Lambda$ is currently under consideration.

The following remark is crucial for our study.
\begin{rem}
We observe that every function $f \in C_b({\mathcal S}_m)$ uniquely determines $m$ functions $\tilde{f}_i \in C_b(\R)$ given by 
\begin{equation} \label{R}
\tilde{f}_i(x):=\left \{ \begin{array}{lll} f(x_i), & |x_i|=x, & \hbox{if } x \geq 0, \\ \frac{2}{m}\sum\limits_{\substack{1\le j\le m}} f(-x_j)- f(-x_i), & |x_i|=-x, & \hbox{if } x \leq 0, \end{array} \right.
\quad i=1,\ldots,m.
\end{equation}
\end{rem}
Classical solutions to $({\rm P}_L)$ for $L$ as defined in \eqref{eq:def-L}, are defined as follows.
\begin{defi}
A function $u  \in C_b([0,\infty) \times {\mathcal S}_m)$ is called \emph{classical solution} of $({\rm P}_L)$ if $u(\cdot,x) \in  C^1((0,\infty))$ for every $x \in {\mathcal S}_m$, $u(t,\cdot) \in D(L)$ for every $t>0$ and $u$ satisfies $({\rm P}_L)$.
\end{defi}

The main result of this section concerns existence of solution to the problem $({\rm P}_L)$.
\begin{theo}\label{main-thm}
Assume that $q,\,b,\,c$ are in $C^\alpha_{\hbox{\scriptsize loc}}([0,\infty))$ for some $\alpha \in (0,1)$, $q(x) > 0$ for all $x \in [0,\infty)$, $\sup c\le c_0$ for some $c_0\in \R$ and $b(0)=0$. Then, 
for every function $f \in C_b({\mathcal S}_m)$, there exists at least one classical solution of $({\rm P}_L)$. 

Furthermore, if the solution to $({\rm P}_{\tilde{L}})$ is unique then so is the solution of $({\rm P}_{{L}})$. In that case the semigroup $(T_m(t))_{t\ge 0}$ generated by $L$ on 
$C_b({\mathcal S}_m)$ is given by
\begin{equation} \label{semiT} 
\begin{split}
T_m(t)f(x_i)&=  \int_{(\R_+,i)} \big (k(t,|x_i|,|y_i|)- k(t,|x_i|,-|y_i|) \big )f(y_i)\, \ud y_i \\
      &\quad +  \sum_{ j=1}^m \int_{(\R_+,j)} \frac{2}{m}k(t,|x_i|,-|y_j|)f(y_j)\, \ud y_j,\quad f\in C_b({\mathcal S}_m),\,x_i\in {\mathcal S}_m,\,i=1,\ldots ,m, 
      \end{split}
\end{equation}
where $k$ is the integral kernel of the semigroup $(T(t))_{t\ge 0}$ generated by $\tilde{L}$. Moreover if $c\equiv 0$ and $T(t_0)\one =\one$ for some $t_0>0$, then $T_m(t)\one=\one$ for all $t\ge 0$ (i.e., $T_m(\cdot)$ is
\emph{conservative}).
\end{theo}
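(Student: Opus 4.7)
My plan is to exploit the symmetries of the metric star, together with the invariance properties of the auxiliary problem $(P_{\tilde L})$ on $\R$, to build classical solutions on $\mathcal S_m$ from solutions on the real line. Given $f\in C_b(\mathcal S_m)$, I produce the extensions $\tilde f_i\in C_b(\R)$ via~\eqref{R} and set
\[
u(t,x_i) := \bigl(T(t)\tilde f_i\bigr)\bigl(|x_i|\bigr), \qquad t\ge 0,\ x_i\in \mathcal S_m.
\]
Since $\tilde q$ and $\tilde c$ are even while $\tilde b$ is odd by~\eqref{eq:extention-q}, the operator $\tilde L$ commutes with the reflection $\mathcal Rg(x):=g(-x)$; a standard semigroup argument transfers this invariance to $T(t)$, equivalently the integral kernel satisfies $k(t,-x,-y)=k(t,x,y)$, so that $T(t)$ preserves both evenness and oddness.

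The proof of existence then reduces to two elementary observations about~\eqref{R}: first, $\tilde f_i-\tilde f_j$ is odd on $\R$, and second, $\sum_{i=1}^m\tilde f_i$ is even on $\R$. The first gives $(T(t)\tilde f_i)(0)=(T(t)\tilde f_j)(0)$, whence $u(t,\cdot)\in C_b(\mathcal S_m)$; the second yields $\sum_i (T(t)\tilde f_i)'(0)=0$, which is precisely the Kirchhoff condition. The parabolic equation and the required local Sobolev regularity on each open half-line are automatic, since $\tilde L$ restricted to $(0,\infty)$ agrees with $L$ on the $i$-th edge and $T(t)\tilde f_i$ is a classical solution of $(P_{\tilde L})$. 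For uniqueness I reverse the construction: given any classical solution $u$ of $(P_L)$, define $\tilde u_i$ edgewise through~\eqref{R}. Continuity of $\tilde u_i$ at $0$ is immediate (both branches evaluate to $u(t,0)$); continuity of $\tilde u_i'$ at $0$ follows from the Kirchhoff condition combined with the reflection structure of~\eqref{R}; and continuity of $\tilde u_i''$ at $0$ is forced by $Lu(t,\cdot)\in C_b(\mathcal S_m)$: since $b(0)=0$ and $q(0)>0$, this requires $u''(t,0_i)$ to be independent of $i$, and a short computation then shows the jump in $\tilde u_i''$ at $0$ vanishes. Hence $\tilde u_i\in D(\tilde L)$ and satisfies $(P_{\tilde L})$ with initial datum $\tilde f_i$, so by hypothesis $\tilde u_i=T(t)\tilde f_i$, which determines $u$ uniquely.

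Substituting~\eqref{R} into $T(t)\tilde f_i(x)=\int_\R k(t,x,y)\tilde f_i(y)\,\ud y$ and applying the change of variables $y\mapsto -y$ on $(-\infty,0)$ produces formula~\eqref{semiT}. For conservation I first observe that $c\equiv 0$ forces $T(t)\one\le \one$; inserted into $\one=T(s)T(t_0-s)\one$, sub-Markovianity of both factors propagates the equality $T(t_0)\one=\one$ to $T(t)\one=\one$ for every $t\ge 0$. Plugging $f\equiv\one$ into~\eqref{semiT}, the sum over $j$ contributes a factor $2$ that cancels the negative-kernel subtraction and recombines into
\[
T_m(t)\one(x_i) = \int_0^\infty k(t,|x_i|,y)\,\ud y + \int_0^\infty k(t,|x_i|,-y)\,\ud y = \int_{\R} k(t,|x_i|,z)\,\ud z = 1.
\]
The step I expect to be most delicate is the second-order regularity of $\tilde u_i$ at the origin in the uniqueness half of the argument, which rests precisely on the interplay between the Kirchhoff condition, the membership $Lu(t,\cdot)\in C_b(\mathcal S_m)$, and the assumption $b(0)=0$; in essence this is why these hypotheses appear in the statement.
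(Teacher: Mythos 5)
Your overall strategy coincides with the paper's: extend $f$ to the family $(\tilde f_i)$ via \eqref{R}, exploit that $\tilde f_i-\tilde f_j$ is odd and $\sum_i\tilde f_i$ is even to read off continuity and the Kirchhoff condition at the vertex, recover \eqref{semiT} by the substitution $y\mapsto -y$ on the negative half-line, and reduce uniqueness for $({\rm P}_{L})$ to uniqueness for $({\rm P}_{\tilde L})$. Your elaboration of the uniqueness step --- checking that $\tilde u_i$ is $C^1$ across $0$ thanks to the Kirchhoff condition, and that the second derivatives match because $b(0)=0$, $q(0)>0$ and $Lu(t,\cdot)\in C_b(\mathcal S_m)$ force $u''(t,0_i)$ to be independent of $i$ --- is correct and in fact more detailed than the paper's one-line reduction. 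The direct propagation of $T(t_0)\one=\one$ to all $t\ge 0$ via positivity and sub-Markovianity is also sound, and reproduces the content of the result the paper cites.

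There is, however, one step where you assume exactly what the paper has to prove. You take the semigroup $T(t)$ generated by $\tilde L$ on $C_b(\R)$, together with its integral kernel, as given, and you justify the crucial identity $T(t)\mathcal R=\mathcal R T(t)$ by ``a standard semigroup argument'' from $\tilde L\mathcal R=\mathcal R\tilde L$. In the existence half of the theorem no uniqueness for $({\rm P}_{\tilde L})$ is assumed; for parabolic problems with unbounded coefficients bounded classical solutions need not be unique, so commutation of $\tilde L$ with the reflection does not by itself force the selected solution operator to commute with it: $T(t)\mathcal Rf$ and $\mathcal RT(t)f$ both solve the same Cauchy problem, but that alone does not identify them. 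The paper circumvents this by constructing $T(t)$ as the limit of the Cauchy--Dirichlet problems \eqref{eq:troncated} on the symmetric intervals $(-n,n)$, where uniqueness does hold, so that oddness/evenness of the data is inherited by $u_i^n$ at every finite stage and survives the passage to the limit via Schauder interior estimates; the same construction delivers the existence of the kernel $k$ and the bound $|u_i(t,x)|\le \e^{c_0 t}\|\tilde f_i\|_\infty$ that you use implicitly. To close your argument you should either reproduce this truncation scheme or state explicitly that $T(t)$ is the minimal semigroup obtained from symmetric exhaustions, for which the reflection-invariance can legitimately be verified.
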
        

At the danger of being redundant, we stress that the integral kernel $k$ depends on $q$, $b$, and $c$. Also, we do not expect $T_m(\cdot)$ to be strongly continuous.


\begin{proof}
To construct a solution for the initial data $f\in C_b({\mathcal S}_m)$ we first consider problem $({\rm P}_L)$ on the truncated stars ${\mathcal S}_m^n$, $n \in \N$, with initial data $f_{|{\mathcal S}_m^n}$ and with Dirichlet boundary conditions on the endpoints $(n,i)$ for each $i=1,\ldots,m$.
For each $n\in \N$ and $i=1,\ldots ,m$ we consider the Cauchy--Dirichlet problem
\begin{equation}\label{eq:troncated}
\left\{ 
\begin{aligned} 
\partial_t u_i^n(t,\cdot)  &= \tilde{L} u_i^n(t,\cdot), && t > 0, \\
u_i^n(t,-n) &= u_i^n(t,n)=0, && t > 0,\\
u_i^n(0,x)  &= \tilde{f}_i(x), && x\in (-n,n), 
\end{aligned} 
\right. 
\end{equation}
where $\tilde{f}_i$ is the function given by \eqref{R}. By classical results, cf. \cite[Theorem 9.4.1]{Lorenzi-Rhandi},
for parabolic Cauchy problems in bounded
domains we know that the above problem
has a unique solution $u_i^n \in C([0,\infty)\times (-n,n))\cap C^{1+\frac{\alpha}{2},2+\alpha}_{\scriptsize loc}((0,\infty)\times [-n,n])$, $i=1,\ldots,m$.

We now define a function $\hat{u}^n$ on $[0,\infty)\times {\mathcal S}_m^n$ by 
\begin{equation}\label{func:graph}
\hat{u}^n(t,x_i):=u_i^n(t,|x_i|),\quad i=1,\ldots,m,\quad |x_i| \ge 0,\quad t \geq 0.
\end{equation}
In order to prove that $\hat{u}^n$ is a ``classical'' solution of problem (P$_{L}$) 
on ${\mathcal S}_m^n$, we only have to verify that $\hat{u}^n(t,\cdot)\in D(L_{|_{C_b(S^n_m)}})$ for all $t>0$, i.e.,
\begin{equation}\label{continuity}
\hat{u}^n(t,0_i)=\hat{u}^n(t,0_j)   \hbox{ for all } i,j \in \{1,\ldots,m\} \quad \hbox{and} \quad \sum_{i=1}^m (\hat{u}^n)'(t,0_i)=0,  \quad t > 0.
\end{equation}
Given the functions $\tilde{f}_i$ constructed according to \eqref{R}, $i=1,\ldots,m$, let us define functions $\tilde{f}_{i,j}:\R\to\R$ by $\tilde{f}_{i,j}(x):=\tilde{f}_i(x)-\tilde{f}_j(-x)$ for each $i,j=1,\ldots,m$. Now, each such $\tilde{f}_{i,j}$ is odd, since by construction $\tilde{f}_{i,j}=\tilde{f}_{j,i}$.
 Therefore, using the definition of the functions $\tilde{q}$ and $\tilde{b}$ we deduce that the unique solution $v_{ij}^n(t,x):=u_i^n(t,x)-u_j^n(t,-x)$ of \eqref{eq:troncated} with initial data $\tilde{f}_{i,j}$ is odd
 and especially $v_{ij}^n(t,0)=0$ for all $t \geq 0$. This and \eqref{func:graph} imply the continuity of $\hat{u}^n$, i.e., the first condition in~\eqref{continuity}. 
To prove the second condition in~\eqref{continuity} one considers the solution $v^n(t,x)=\sum_{i=1}^mu_i^n(t,x)$ of \eqref{eq:troncated} with initial data the function $\tilde{f}(x)=\sum_{i=1}^m \tilde{f}_i(x)$, which is even. Thus, again by \eqref{eq:extention-q}, one deduces that $v^n(t,x)=v^n(t,-x)$. This proves the second condition in~\eqref{continuity}.


Now, using Schauder interior estimates and a compactness argument, cf.~\cite[Theorem 2.2.1]{LorBer07}, we know that for
each $i=1,\ldots,m$, the function $u_i:[0,\infty)\times \R\to \R$
$$u_i(t,x):=\lim_{n\to \infty}u_i^n(t,x)$$ exists for any $t\ge 0$ and any $x\in \R$, and belongs to 
$C([0,\infty)\times \R)\cap C^{1+\frac{\alpha}{2},2+\alpha}_{\hbox{ \scriptsize loc}}((0,\infty)\times \R)$. Moreover for each $i=1,\ldots,m,\,u_i$ is a classical solution of 
(P$_{\tilde L}$) on $(0,\infty) \times \R$ with initial data $f_i$ and satisfies
\begin{equation}\label{contract.}
|u_i(t,x)|\le \e^{c_0 t}\|\tilde{f}_i\|_\infty ,\quad t>0,\,x\in \R ,\,i=1,\ldots ,m.
\end{equation}
If we set $T(t)\tilde{f}_i(x):=u_i(t,x)$, then $(T(t))_{t\ge 0}$ is a semigroup on $C_b(\R)$ satisfying
$$T(t)f(x)=\int_{\R}k(t,x,y)f(y)\,dy,\quad f\in C_b(\R),\,t>0,\,x\in \R ,$$
where the kernel $k$ is strictly positive, $k(t,\cdot ,\cdot)$ and $k(t,x,\cdot)$
are measurable for any $t>0,\,x\in \R$ and for a.e. fixed $y\in \R$, $k(\cdot ,\cdot ,y)\in C^{1+\frac{\alpha}{2},2+\alpha}_{\scriptsize loc}((0,\infty)\times \R)$ and it is a solution of $\partial_t u-\tilde{L}u=0$, cf. \cite[Theorem 2.2.5]{LorBer07}.

Defining now the function $T_m(t)f: {\mathcal S}_m\to \R$ by
\[
T_m(t)f(x_i):=T(t)\tilde{f}_i(|x_i|),\qquad i=1,\ldots,m,\ |x_i| \ge 0,\ t \geq 0,
\]
 and using \eqref{continuity}, we arrive at the desired classical solution of (P$_L$) on ${\mathcal S}_m$. Moreover, using \eqref{contract.}, we deduce that $(T_m(t))_{t\ge 0}$ is a semigroup of contractions on $C_b({\mathcal S}_m)$. On the other hand, by \eqref{R}, we have 
\[
\begin{split} T_m(t)f(x_i)&=  \int_{\R_+} k(t,|x_i|,y)f(y)\, \ud y + \int_{\R_-} k(t,|x_i|,y)f(y)\, \ud y \\
  &=  \int_{(\R_+,i)} \big (k(t,|x_i|,|y_i|)- k(t,|x_i|,-|y_i|) \big )f(y_i)\, \ud y_i \\
      & \qquad +  \sum_{ j=1}^m \int_{(\R_+,j)} \frac{2}{m}k(t,|x_i|,-|y_j|)f(y_j)\, \ud y_j. \notag
\end{split}
\]
If furthermore $c\equiv 0$ and $T(t_0)\one =\one$ for some $t_0>0$, then, by \cite[Proposition 4.1.10]{LorBer07},  $T(t)\one =\one$ for  all $t\ge 0$, and hence $\int_{\R}k(t,x,y)\,dy=1$ for  all $t>0,\,x\in \R$. This implies that
$$ \int_{(\R_+,i)} k(t,|x_i|,|y_i|)\,\ud y_i+\int_{(\R_+,j)} k(t,|x_i|,-|y_j|)\,\ud y_j=1,\quad \forall i,\,j=1,\ldots ,m,\,x\in {\mathcal S}_m,\,t>0,$$ holds. So, it follows from \eqref{semiT} that the  semigroup  $(T_m(t))_{t\ge 0}$ is conservative.

Finally, if $u$ is a solution of (P$_L$) with initial data $f\equiv 0$, then for each $i=1,\ldots ,m$, $u_i$ defined by \eqref{R} is a solution of (P$_{\tilde L}$) with $u_i(0,\cdot)=0$. Thus, the uniqueness of the solution to (P$_{\tilde L}$) implies that $u_i\equiv 0$ for each $i=1,\ldots ,m$, and hence $u\equiv 0$.
\end{proof} 
 
\begin{rem}\label{rem:3-4}
\begin{itemize}
\item[(a)] The formula \eqref{semiT} shows that the semigroup $(T_m(t))_{t\ge 0}$ is positive provided
$$k(t,|x_i|,|y_i|)\ge k(t,|x_i|,-|y_i|),\quad t>0,\,x_i,\,y_i\in {\mathcal S}_m.$$ This is especially the case for the Ornstein--Uhlenbeck kernel
$$k_{OU}(t,x,y):=\frac{1}{\sqrt{\pi(1-\e^{-2t})}}\exp[-(1-\e^{-2t})^{-1}(\e^{-t}x-y)^2],\quad t>0,\,x,y\in \R .$$
\item[(b)]
The representation \eqref{semiT} allows us to extend the semigroup to the space $B({\mathcal S}_m)$ of bounded and measurable functions. Moreover this semigroup has the strong Feller property, since $T_m(t)f(x_i)=T(t)\tilde{f}_i(|x_i|)$ and $T(\cdot)$ has the strong Feller property, cf. \cite[Proposition 2.2.12]{LorBer07}.
\end{itemize}
\end{rem}

\section{The Ornstein--Uhlenbeck semigroup on metric star graphs}\label{sec:functional}

As already mentioned in the introduction, a particularly interesting special case of the class operators studied above
is the Ornstein--Uhlenbeck type operator
\begin{equation}\label{eq:opou}
 A f(x_i)= \frac{1}{2}f''(x_i) -|x_i|f'(x_i), \qquad |x_i|\ge 0, \quad  i=1,\ldots, m, 
\end{equation}
 with Kirchhoff-type condition in zero encoded in the domain 
\begin{equation}\label{eq:domou}
D(A)=\{f \in C_b({\mathcal S}_m) \cap  \bigcap_{1\leq p < \infty} W^{2,p}_{\hbox{ \scriptsize loc}}({\mathcal S}_m): \sum_{i=1}^m f'(0_i)=0 \hbox{ and } Af \in C_b({\mathcal S}_m)\}. 
\end{equation}
For $m=1$ we have the Ornstein--Uhlenbeck operator on $\R^+$ with Neumann boundary condition in zero and for $m=2$ the Ornstein--Uhlenbeck operator on $\R$. Therefore our setting can be regarded as a generalization of these well known cases.

As a consequence of Theorem \ref{main-thm} and Remark \ref{rem:3-4} we have the following, where we denote by $S(\cdot)$ the classical Ornstein--Uhlenbeck semigroup on $C_b(\R)$.

\begin{prop}
For every $f \in C_b({\mathcal S}_m)$ there exists a unique bounded, classical solution $u$ of $({\rm P}_A)$. This solution is given by the so-called \emph{Ornstein--Uhlenbeck semigroup on ${\mathcal S}_m$}
\begin{equation}\label{semiS} 
\begin{split}
(S_m(t)f)(x_i)&:=u(t,x_i)=S(t)\tilde{f}_i(|x_i|)\\
&= \frac{1}{\sqrt{\pi(1-\e^{-2t})}}\int_{(\R_+,i)} \Big ( \exp[-(1-\e^{-2t})^{-1}(\e^{-t}|x_i|-|y_i|)^2] \\
&\qquad\quad  - \exp[-(1-\e^{-2t})^{-1}(\e^{-t}|x_i|+|y_i|)^2] \Big ) f(y_i)\,\ud y_i \\
&\qquad  + \frac{2}{m\sqrt{\pi(1-\e^{-2t})}}\sum_{\substack{1\le j\le m}}
 \int_{(\R_+,j)} \exp[-(1-\e^{-2t})^{-1}(\e^{-t}|x_i|+|y_j|)^2]f(y_j)\,\ud y_j
\end{split}
\end{equation}
for $1\le i\le m$.
Furthermore, $S_m(\cdot)$ is irreducible, conservative, contractive and has the strong Feller property.  
\end{prop}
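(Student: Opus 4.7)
The plan is to read this proposition as a direct application of Theorem \ref{main-thm} with the specific choices $q\equiv \tfrac12$, $b(x)=-x$, $c\equiv 0$, together with Remark \ref{rem:3-4}. All hypotheses of the theorem are trivially satisfied: $q,b,c$ are smooth on $[0,\infty)$, hence in $C^\alpha_{\mathrm{loc}}([0,\infty))$; $q>0$; $b(0)=0$; and $\sup c=0$. The symmetric extension prescribed by \eqref{eq:extention-q} yields $\tilde q\equiv\tfrac12$, $\tilde c\equiv 0$, and $\tilde b(x)=-b(-x)=-x$ for $x\le 0$, so that $\tilde b(x)=-x$ on all of $\R$. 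Consequently $\tilde L$ is precisely the classical Ornstein--Uhlenbeck operator on $\R$, whose associated semigroup $S(\cdot)$ on $C_b(\R)$ admits the strictly positive Mehler kernel $k_{OU}$ recalled in Remark \ref{rem:3-4}(a).

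With this identification, Theorem \ref{main-thm} immediately provides a classical solution of $(P_A)$, and the representation \eqref{semiS} is obtained by substituting $k=k_{OU}$ into \eqref{semiT}. For uniqueness, Theorem \ref{main-thm} reduces the task to uniqueness of the bounded classical solution of $(P_{\tilde L})$ on $\R$, which is a classical fact for the Ornstein--Uhlenbeck Cauchy problem on $C_b(\R)$: it can either be read off \cite[Theorem 2.2.5]{LorBer07} or proved via the standard Lyapunov argument with $\varphi(x)=1+x^2$, using that $\tilde L\varphi - c_0\varphi$ is bounded above.

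The qualitative properties come essentially for free. Conservativity is the second assertion of Theorem \ref{main-thm} applied with $c\equiv 0$ and $S(t)\one=\one$. Positivity of $S_m(\cdot)$ follows from Remark \ref{rem:3-4}(a): the pointwise inequality $(\e^{-t}x-y)^2\le(\e^{-t}x+y)^2$ for $x,y\ge 0$ (equivalent to $xy\ge 0$) gives $k_{OU}(t,|x_i|,|y_i|)\ge k_{OU}(t,|x_i|,-|y_i|)$, so both contributions in \eqref{semiS} act positively. Combining positivity and conservativity yields contractivity via $|S_m(t)f|\le S_m(t)|f|\le \|f\|_\infty\,S_m(t)\one=\|f\|_\infty$. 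The strong Feller property is exactly Remark \ref{rem:3-4}(b), inherited from the classical strong Feller property of $S(\cdot)$.

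The only property requiring a brief separate verification is irreducibility. For $0\le f\in C_b(\mathcal S_m)$ with $f\not\equiv 0$, some component $f_j$ is not a.e.\ zero on $(0,\infty)$; since $k_{OU}(t,\cdot,\cdot)>0$ strictly on $\R\times\R$, the second sum in \eqref{semiS} is then strictly positive at every $x_i\in\mathcal S_m$ and every $t>0$, which is already enough (the first integrand is merely nonnegative and in fact vanishes when $|x_i|=0$, but it need not be used). I expect uniqueness of the bounded classical solution of the Ornstein--Uhlenbeck Cauchy problem on $C_b(\R)$ to be the only ingredient beyond what is already packaged in Theorem \ref{main-thm} and Remark \ref{rem:3-4}, and it is entirely standard; everything else is a matter of reading off.
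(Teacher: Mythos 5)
Your proposal is correct and follows essentially the same route as the paper: the authors likewise read the proposition off Theorem~\ref{main-thm} and Remark~\ref{rem:3-4} with $q\equiv\tfrac12$, $b(x)=-x$, $c\equiv 0$, and devote the written proof only to contractivity (via positivity plus conservativity) and irreducibility (via strict positivity of the Mehler kernel in the scattering term). Your version is if anything slightly more careful, since you make explicit the uniqueness of the bounded classical solution of $(P_{\tilde L})$ on $\R$ that the paper's reduction tacitly invokes.
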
 
\begin{proof}
It suffices to prove that $S_m(\cdot)$ is irreducible and contractive. To prove the contractivity of $S_m(\cdot)$ we take $f\in C_b({\mathcal S}_m)$ and $t\ge 0$. Now, by Remark \ref{rem:3-4}.(a), $S_m(\cdot)$ is positive and so, $S_m(t)\one=\one$ implies that
$$|S_m(t)f|\le S_m(t)|f|\le \|f\|_\infty S_m(t)\one =\|f\|_\infty.$$

To show the irreducibility
 of $S_m(\cdot)$, let us consider $0\le f\in C_b({\mathcal S}_m)$ such that $f\not\equiv 0$. Assume, by contradiction, that there is $x=x_i\in {\mathcal S}_m$ and $t>0$ such that
$S_m(t)f(x_i)=0$. So, by \eqref{semiS} and Remark \ref{rem:3-4}.(a), we have
\begin{equation}\label{eq:irreduc}
 \int_{(\R_+,j)} \exp[-(1-\e^{-2t})^{-1}(\e^{-t}|x_i|+|y_j|)^2]f(y_j)\,\ud y_j=0,\quad \forall j=1,\ldots m.
 \end{equation}
Thus, $f\equiv 0$, which is a contradiction.
\end{proof}

\begin{rem}
In view of~\eqref{semiS}, an equivalent formula for the Ornstein--Uhlenbeck semigroups $(S_m(t))_{t\ge 0}$ is as follows:
\[
\begin{split}
(S_m(t)f)(x_i)
&= \frac{1}{\sqrt{\pi(1-\e^{-2t})}}\int_{(\R_+,i)}  \e^{-\frac{(\e^{-t}|x_i|-|y_i|)^2}{(1-\e^{-2t})}} f(y_i)\,\ud y_i\\
&\qquad  + \frac{1}{\sqrt{\pi(1-\e^{-2t})}}\sum_{\substack{1\le j\le m}} \int_{(\R_+,j)} \sigma_{ij}\e^{-\frac{(\e^{-t}|x_i|+|y_j|)^2}{(1-\e^{-2t})}} f(y_j)\,\ud y_j,
\end{split}
\]
where $\Sigma=(\sigma_{ij})$ is the scattering matrix defined by
\[
\sigma_{ij}:=\left\{
\begin{aligned}
&\frac{2-m}{m},\qquad &&\hbox{if }i=j,\\
&\frac{2}{m}, &&\hbox{otherwise}.
\end{aligned}
\right.
\]
In other words, the integral kernel of $(S_m(t))_{t\ge 0}$ can be obtained as the overlapping of the plain Ornstein--Uhlenbeck kernel on $\mathbb R$ (corresponding to the unscattered movement of a particle between two points of the same edge of ${\mathcal S}_m$) and the weighted sum of the paths between $x$ and a point $y$ on different edges (with weight $\frac{2}{m}$), or else on the same edge with $y$ reached only after the particle has been reflected in the center of the star (with weight $\frac{2-m}{m}$). Observe that if $m=2$, then no reflection is possible and the above formula yields just the usual Ornstein--Uhlenbeck semigroup $S(\cdot)$ on $\mathbb R$. Following the ideas of~\cite{Rot84} we can conjecture that this formula may be generalized to an arbitrary graph ${\mathcal G}$ as follows:
\[
(\widetilde{S}(t)f)(x)=\int_{\mathcal G} p(t,x,y)f(y)dy\qquad \hbox{for all }t>0 \hbox{ and }x\in {\mathcal G},
\]
where, for any two $x,y\in {\mathcal G}$, ${\mathfrak P}_{x,y}$ denotes the set of all paths from $x,y$, 
\[
p(t,x,y):=\sum_{\mP \in {\mathfrak P}_{x,y}} \sigma(\mP) G_1(t,\dist(\e^{-t}x,y)),\qquad t>0,\; x,y\in {\mathcal G},
\]
$G_1$ is the convolution kernel of the usual Ornstein--Uhlenbeck semigroup on $\mathbb R$, i.e.
\[
G_1(t,z):=\frac{\e^{-\frac{|z|^2}{1-\e^{-2t}}}}{\sqrt{\pi(1-\e^{-2t})}},\qquad t>0,\ z\in \mathbb R,
\]
$\sigma(\mP)$ is the product of all scattering coefficients along the path $\mP$, and $\dist(x,y)$ is the distance between $x,y$ on ${\mathcal G}$ viewed as a metric-measure space. This is in the spirit of the generalized Roth formulae for heat kernels discussed in~\cite{BecGreMug21}, albeit our metric star graphs do not formally satisfy the assumptions on boundedness of the graph's geometry therein.
\end{rem}

\begin{rem}\label{rem:Cb}
The usual properties of the Ornstein--Uhlenbeck semigroup, cf. \cite[Sections 9.2 and 9.4]{LorBer07}, hold (with the same proofs):
\begin{itemize}
\item Since $S_m(t)f(x_i)=S(t)\tilde{f}_i(|x_i|)$ and $C_0(\R)$ is invariant for $S(t)$, it follows that $S_m(t)$ maps $C_0({\mathcal S}_m)$ into $C_0({\mathcal S}_m)$ for all $t\ge 0$.
\item $S_m(t)$ is not compact on $C_b({\mathcal S}_m)$. This can be proven as in \cite[Theorem 5.1.11]{LorBer07}.
\item $S_m(\cdot)$ is not strongly continuous on $C_b({\mathcal S}_m)$. More specifically, $\lim_{t \to 0} \|S_m(t)f-f\|_\infty= 0$ iff  $f \in BUC({\mathcal S}_m)$ and $\lim_{t \to 0} |f(\e^{-t}x_i)-f(x_i)|= 0$ uniformly with respect to $x_i \in {\mathcal S}_m$. 
\item 
From \eqref{semiS} one deduces that $S_m(\cdot)$ extrapolates to a consistent family of strongly continuous semigroups on $L^p({\mathcal S}_m)$ for all $1 \leq p < \infty$.
\end{itemize}
\end{rem} 
The following result gives the unique invariant measure of $S_m(\cdot)$.           
\begin{theo}
There exists a unique invariant probability measure $\mu_m$ for the Ornstein--Uhlenbeck semigroup $S_m(\cdot)$. This measure has density 
\begin{equation} \mu_m(\ud x_i)=\frac{2}{m\sqrt{\pi}}\e^{-|x_i|^2}\ud x_i, \quad i=1, \ldots,m, \end{equation} 
with respect to Lebesgue measure.  
\end{theo}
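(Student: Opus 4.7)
My plan is to establish existence by verifying that the proposed density $\mu_m$ is invariant under $S_m(\cdot)$, and to establish uniqueness via a Doob-type theorem, invoking the irreducibility and strong Feller property established in the preceding proposition. The computation of invariance hinges on a symmetrization trick reducing to the known invariance of the standard Gaussian measure $\nu(\ud x):=\frac{1}{\sqrt\pi}\e^{-x^2}\ud x$ for the classical Ornstein--Uhlenbeck semigroup $S(\cdot)$ on $\R$.

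First I would verify that $\mu_m$ is a probability measure: this is a one-line Gaussian integral, $\mu_m(\mathcal S_m) = m\cdot \frac{2}{m\sqrt\pi}\cdot\frac{\sqrt\pi}{2}=1$. The heart of the matter is invariance, i.e., $\int_{\mathcal S_m}S_m(t)f\,\ud\mu_m=\int_{\mathcal S_m}f\,\ud\mu_m$ for every $f\in C_b(\mathcal S_m)$. To prove it, I would introduce $F:\R\to\K$ defined by $F(x):=\sum_{i=1}^m \tilde f_i(x)$, with $\tilde f_i$ as in~\eqref{R}. A direct bookkeeping shows that $F$ is even: for $x\ge 0$ one has $F(x)=\sum_{i=1}^m f(x_i)$, whereas for $x\le 0$ the coefficients in~\eqref{R} arrange so that
\[
F(x)=\sum_{i=1}^m\left[\frac{2}{m}\sum_{j=1}^m f((-x)_j)-f((-x)_i)\right]=2\sum_{j=1}^m f((-x)_j)-\sum_{i=1}^m f((-x)_i)=\sum_{j=1}^m f((-x)_j),
\]
which agrees with $F(-x)$.

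Next, since the classical Ornstein--Uhlenbeck kernel satisfies $k(t,x,y)=k(t,-x,-y)$, the semigroup $S(\cdot)$ preserves the subspace of even functions; combining this with linearity of $S(t)$ I would then compute
\[
\int_{\mathcal S_m}S_m(t)f\,\ud\mu_m=\frac{2}{m}\sum_{i=1}^m\int_0^\infty S(t)\tilde f_i(x)\frac{\e^{-x^2}}{\sqrt\pi}\,\ud x=\frac{2}{m}\int_0^\infty S(t)F(x)\,\ud\nu(x)=\frac{1}{m}\int_{\R} S(t)F\,\ud\nu,
\]
the last equality using evenness of $S(t)F$. By $\nu$-invariance of $S(\cdot)$ on $\R$, this equals $\frac{1}{m}\int_{\R} F\,\ud\nu=\frac{2}{m}\int_0^\infty F(x)\frac{\e^{-x^2}}{\sqrt\pi}\,\ud x=\int_{\mathcal S_m}f\,\ud\mu_m$, as required.

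For uniqueness I would invoke a Doob-type theorem: since $S_m(\cdot)$ has been shown to be both irreducible and strongly Feller (preceding proposition and Remark~\ref{rem:3-4}(b)), any two invariant probability measures must coincide. The main obstacle in this plan is arguably the evenness check for $F$, which is precisely what pins down the apparently ad hoc coefficients $\frac{2}{m}$ and $-1$ in the reflection formula~\eqref{R} and, through them, the normalization constant $\frac{2}{m\sqrt\pi}$ appearing in $\mu_m$. Once that symmetrization is in place, the rest reduces to invoking well-documented one-dimensional facts.
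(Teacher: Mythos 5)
Your argument is correct and follows essentially the same route as the paper: both reduce invariance to the evenness of $F=\sum_{i=1}^m\tilde f_i$ (which is exactly what the coefficients in \eqref{R} are designed to ensure) together with the invariance of the Gaussian measure for the classical Ornstein--Uhlenbeck semigroup on $\R$, and both settle uniqueness by the standard irreducibility/strong Feller/ergodicity machinery. Your version merely spells out the evenness computation and names Doob's theorem explicitly where the paper appeals to ergodicity.
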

\begin{proof}
Let $f \in C_b({\mathcal S}_m)$, $\tilde{f}_i \in C_b(\R)$ as in \eqref{R}, $i=1,\ldots,m$. Let $S(\cdot)$ be the Ornstein--Uhlenbeck semigroup on $\R$, $\mu$ the Gaussian measure on $\mathbb R$, $\mu(\ud x)=\frac{1}{\sqrt{\pi}}\e^{-|x|^2}\ud x$, and $S_m(\cdot)$ be the Ornstein--Uhlenbeck semigroup on ${\mathcal S}_m$. We know $\mu$ is the invariant measure of $S(\cdot)$, hence
$$ \int_\R S(t) \sum_{i=1}^m\tilde{f}_i(x)\,\mu (\ud x)=\int_\R \sum_{i=1}^m\tilde{f}_i(x)\,\mu (\ud x) \quad \hbox{for all } t >0,$$
and, because $\sum_{i=1}^m\tilde{f}_i(x)$ and therewith $S(t) \sum_{i=1}^m\tilde{f}_i(x)$ are even functions on $\R$, we infer that $\frac{1}{\sqrt{\pi}}\e^{-|x|^2} \ud x$, $i=1,\ldots,m$, defines an invariant measure for $S_m(\cdot)$. After normalizing this measure we may conclude that $\mu_m$ is indeed an invariant probability measure for $S_m(\cdot)$.

Uniqueness follows along the same lines as usual from the ergodicity of the invariant measure.
\end{proof}

\begin{rem}\label{rem:Lp}
As in Remark \ref{rem:Cb}, the regularity properties of the classical Ornstein--Uhlenbeck semigroup on $\R$, cf. \cite[Sections 9.3]{LorBer07}, hold for the semigroup $S_m(\cdot)$ on $L^p_{\mu_m}({\mathcal S}_m)$:
\begin{itemize}
\item For any $p\in (1,\infty)$, $S_m(\cdot)$ is analytic in $L^p_{\mu_m}({\mathcal S}_m)$ and consistent, i.e. $S_m(t)$ on $L^p_{\mu_m}({\mathcal S}_m)$ and on $L^q_{\mu_m}({\mathcal S}_m)$ coincide on $L^p_{\mu_m}({\mathcal S}_m)\cap L^q_{\mu_m}({\mathcal S}_m)$ for all $p,q\in (1,\infty)$ and $t\ge 0$.
\item  For any $p\in (1,\infty)$, $W^{1,p}_{\mu_m}({\mathcal S}_m)$ is compactly embedded in $L^p_{\mu_m}({\mathcal S}_m)$.
\item  The semigroup $S_m(t)$ maps $L^p_{\mu_m}({\mathcal S}_m)$ into $W^{1,p}_{\mu_m}({\mathcal S}_m)$ and hence $S_m(t)$ is compact in $L^p_{\mu_m}({\mathcal S}_m),\,1<p<\infty ,$ for any $t>0$.
\end{itemize}
\end{rem} 

For later purposes we propose to characterize the domain of the realization, $A_2$
of $A$ in $L^2_{\mu_m}({\mathcal S}_m)$. Here we recall that
\begin{eqnarray*}
& & L^2_{\mu_m}({\mathcal S}_m)=\bigoplus_{i=1}^mL^2_{\mu_m}(\R_+,i)\,\hbox{\ endowed with the norm } \\
& & \|f\|_{L^2_{\mu_m}({\mathcal S}_m)}^2=\sum_{i=1}^m\|f_i\|_{L^2_{\mu_m}(\R_+,i)}^2\,\hbox{\ for }f=(f_i)_{1\le i\le m}.
\end{eqnarray*}
Like in~\eqref{eq:sobol-defin1}, 
we define the weighted Sobolev spaces
$$\widetilde{H^k_{\mu_m}}({\mathcal S}_m):=\bigoplus_{i=1}^mH^k_{\mu_m}(\R_+,i),\quad k\in \mathbb{N}.$$
On $$D(a):=H^1_{\mu_m}({\mathcal S}_m):=\{f\in \widetilde{H^1_{\mu_m}}({\mathcal S}_m): f_i(0)=f_j(0) \hbox{\ for all }i,j=1,\ldots, m\}$$
we define the sesquilinear form 
$$a(f,g):=\frac{1}{2}\sum_{i=1}^m\int_{(\R_+,i)}f'_i(x_i)\overline{g'_i(x_i)}\mu_m(\ud x_i),\quad f,g\in D(a).$$
Since $a$ is densely defined, symmetric, accretive, continuous and closed sesquilinear form, we can associate the self-adjoint operator
\[
\begin{split}
D(B)&:=\left\{f\in D(a): \exists g\in L^2_{\mu_m}({\mathcal S}_m) \hbox{\ s.t. }a(f,\phi)=\langle g,\phi\rangle_{L^2_{\mu_m}({\mathcal S}_m)} \,\forall \phi\in D(a)\right\},\\
Bf&:=g.
\end{split}
\]
We can finally describe the  realization $A_2$ of the Ornstein--Uhlenbeck operator in $L^2_{\mu_m}({\mathcal S}_m)$.
\begin{prop}\label{domain-A2}
The generator $A_2$ of the Ornstein--Uhlenbeck semigroup on $L^2_{\mu_m}({\mathcal S}_m)$ is given by
\[
\begin{aligned}
D(A_2) &=\left\{f\in \widetilde{H^2_{\mu_m}}({\mathcal S}_m): f_i(0)=f_j(0) \hbox{\ for all }i,j=1,\ldots, m \hbox{\ and }\sum_{i=1}^mf'_i(0)=0\right\}\\
(A_2f)_i(x) &= \frac{1}{2}f''_i(x)-xf'_i(x),\quad\hbox{for all } f=(f_i)_{1\le i\le m}\in D(A_2).
\end{aligned}
\]
\end{prop}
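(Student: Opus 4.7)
The plan is to identify $A_2$ with $-B$, where $B$ is the self-adjoint operator associated with the form $a$ introduced above. Since $-B$ already generates an analytic strongly continuous semigroup on $L^2_{\mu_m}({\mathcal S}_m)$ by the first representation theorem, the task reduces to two steps: (i) compute $D(B)$ explicitly, and (ii) show that $(\e^{-tB})_{t\ge 0}$ coincides with $S_m(\cdot)$ on $L^2_{\mu_m}({\mathcal S}_m)$.

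For the easy half of (i), write $\widehat D$ for the candidate domain on the right-hand side of the claim. For $f\in\widehat D$ and $\phi\in D(a)$, edgewise integration by parts (using $(\e^{-x^2})'=-2x\e^{-x^2}$ and $\mu_m(\ud x_i)=\frac{2}{m\sqrt{\pi}}\e^{-x^2}\ud x$) yields
\begin{equation*}
a(f,\phi)=-\frac{\overline{\phi(0)}}{m\sqrt{\pi}}\sum_{i=1}^{m} f'_i(0)+\sum_{i=1}^{m}\int_{(\R_+,i)}\left(-\tfrac{1}{2}f''_i(x)+xf'_i(x)\right)\overline{\phi_i(x)}\,\mu_m(\ud x_i),
\end{equation*}
where the continuity of $\phi$ at the central vertex has been used to factor $\overline{\phi(0)}$ out of the boundary evaluation. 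The Kirchhoff identity $\sum_i f'_i(0)=0$ annihilates the boundary term, hence $f\in D(B)$ with $(Bf)_i=-\tfrac{1}{2}f''_i+xf'_i$ and consequently $(-Bf)_i=\tfrac{1}{2}f''_i-xf'_i=(Af)_i$. For the converse inclusion, I would first test $a(f,\phi)=\langle Bf,\phi\rangle$ against $\phi$ compactly supported in the interior of a single halfline to deduce the distributional identity $-\tfrac{1}{2}f''_i+xf'_i=(Bf)_i$ on each open halfline, so $f_i\in H^2_{\mu_m,\mathrm{loc}}$; then promote this to $H^2_{\mu_m}(\R_+,i)$ globally; and finally test against general $\phi\in D(a)$ with $\phi(0)\ne 0$, invoking the identity just displayed, to extract $\sum_i f'_i(0)=0$. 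Continuity $f_i(0)=f_j(0)$ is free, since $D(B)\subseteq D(a)= H^1_{\mu_m}({\mathcal S}_m)$.

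For step (ii), I would first check that $S_m(\cdot)$ is self-adjoint on $L^2_{\mu_m}({\mathcal S}_m)$ by verifying the detailed balance $K_m(t,x_i,y_j)\e^{-|y_j|^2}=K_m(t,y_j,x_i)\e^{-|x_i|^2}$ for the integral kernel $K_m$ read off from \eqref{semiS}; this follows from the classical relation $k_{OU}(t,x,y)\e^{-y^2}=k_{OU}(t,y,x)\e^{-x^2}$ together with the joint-parity $k_{OU}(t,-x,-y)=k_{OU}(t,x,y)$, treating the $i=j$ and $i\ne j$ contributions separately. Then both $S_m(\cdot)$ and $(\e^{-tB})_{t\ge 0}$ are self-adjoint $C_0$-semigroups on $L^2_{\mu_m}({\mathcal S}_m)$ whose generators act as $\tfrac{1}{2}f''-xf'$ on the dense subspace of compactly supported smooth functions vanishing near the central vertex, so they coincide and $A_2=-B$. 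The main obstacle is the global $H^2_{\mu_m}$-regularity in the reverse inclusion of step (i): naive elliptic estimates do not control the behaviour at infinity under the weight $\e^{-|x|^2}$ on each halfline, and I would sidestep this by transferring the problem, via the symmetrization $f\mapsto(\tilde f_i)$ from~\eqref{R}, to the classical Ornstein--Uhlenbeck generator on $L^2_\mu(\R)$, whose domain is known to be $H^2_\mu(\R)$ (cf.\ \cite[Section~9.3]{LorBer07}).
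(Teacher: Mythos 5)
Your step (i) is essentially the paper's proof: the inclusion $D\subseteq D(B)$ by edgewise integration by parts (your boundary term $-\frac{1}{m\sqrt{\pi}}\overline{\phi(0)}\sum_i f_i'(0)$ is the correct one), and the converse by testing first against $C_c^\infty$ functions supported in the open halflines to get the distributional identity and then against $\phi\in D(a)$ with $\phi(0)\neq 0$ to extract the Kirchhoff condition. You are right to single out the global $H^2_{\mu_m}$-regularity as the delicate point: the paper simply asserts $f_j\in H^2_{\mu_m}(\R_+,j)$, whereas from $-\tfrac12 f_j''+xf_j'=g_j$ with $f_j',g_j\in L^2_{\mu_m}$ one does not immediately control $xf_j'$ (and hence $f_j''$) in $L^2_{\mu_m}$. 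Your proposed transfer via the even/odd decomposition and \eqref{R} to the known domain $H^2_\mu(\R)$ of the Ornstein--Uhlenbeck operator on the line is a legitimate way to close this; note only that the odd components $f_i-\frac1m\sum_j f_j$ extend to $H^1_\mu(\R)$ precisely because continuity at $0$ forces them to vanish there, and that the corresponding weak identity on $\R$ must be checked separately against even and odd test functions.

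The genuine problem is in your step (ii) (which, to be fair, the paper suppresses entirely). The assertion that two self-adjoint $C_0$-semigroups coincide because their generators ``act as $\tfrac12 f''-xf'$ on the dense subspace of compactly supported smooth functions vanishing near the central vertex'' is false: that subspace is \emph{not} a core for either generator, since it is contained in the domain of \emph{every} self-adjoint vertex realization. The same reasoning would prove that the Dirichlet and the Kirchhoff realizations generate the same semigroup, which they do not (their spectra already differ, cf.\ Lemma~\ref{lem:spectrumaevenodd}). Agreement of two self-adjoint operators on a dense set only yields equality if the set is a core, i.e.\ if the common restriction is essentially self-adjoint -- exactly what fails here. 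A correct route: having shown $D\subseteq D(B)$ with $-Bf=Af$, verify that the generator $G$ of $S_m(\cdot)$ on $L^2_{\mu_m}({\mathcal S}_m)$ extends $A|_D$ (e.g.\ by differentiating $t\mapsto S_m(t)f$ for $f\in D$, using the pointwise formula \eqref{semiS} and the classical result on $\R$ through the decomposition \eqref{R}); since $-B$ is self-adjoint and dissipative, hence m-dissipative, it admits no proper dissipative extension, so $G=-B$. Your detailed-balance computation (which does hold: $k_{OU}(t,x,\pm y)\e^{-x^2}$ is symmetric under $x\leftrightarrow y$ for $x,y>0$) is correct but, by itself, does not substitute for the core argument.
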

\begin{proof}
Let $f\in D:=\{f\in \widetilde{H^2_{\mu_m}}({\mathcal S}_m): f_i(0)=f_j(0) \hbox{\ for all }i,j=1,\ldots, m \hbox{\ and }\sum_{i=1}^mf'_i(0)=0\}.$ Then $f\in D(a)$ and integrating by part one obtains
$$\langle -A_2f,\phi\rangle_{L^2_{\mu_m}({\mathcal S}_m)} =a(f,\phi),\quad \forall \phi\in D(a).$$
So, $(-A_2,D)\subseteq (B,D(B))$. 

Now, let $f=(f_i)_{1\le i\le m}\in D(B)$. Then, there is $g\in L^2_{\mu_m}({\mathcal S}_m)$ such that
\begin{equation}\label{eq:form-domain}
a(f,\phi)=\langle g,\phi\rangle_{L^2_{\mu_m}({\mathcal S}_m)},\quad \forall \phi\in D(a).
\end{equation}
For any fixed $j\in \{1,\ldots m\}$ consider the function $\phi =(\phi_i)_{1\le i\le m}$ with $\phi_j\in C_c^\infty(\R_+,i)$ and $\phi_i\equiv 0$ for $i\neq j$. Applying \eqref{eq:form-domain} with $\phi$ as above, one can see that $f_j\in H^2_{\mu_m}(\R_+,j)$ and $-\frac{1}{2}f''_j+x_jf'_j=g_j$. Thus, $f\in \widetilde{H^2_{\mu_m}}({\mathcal S}_m)$. So, we can integrate by part in \eqref{eq:form-domain} and obtain, for any $\phi\in D(a)$,
\begin{eqnarray*}
a(f,\phi) &=& \frac{1}{2}\sum_{i=1}^m\int_{(\R_+,i)}f'_i(x_i)\overline{\phi '_i(x_i)}\mu_m(\ud x_i)\\
&=& \sum_{i=1}^m \int_{(\R_+,i)}\left(-\frac{1}{2}f''_i(x_i)+x_if'_i(x_i)\right))\overline{\phi_i(x_i)}\mu_m(\ud x_i)+\frac{1}{2}\sum_{i=1}^mf'_i(0)\overline{\phi_i(0)}\\
&=& \langle g,\phi\rangle_{L^2_{\mu_m}({\mathcal S}_m)}+\frac{1}{2}\sum_{i=1}^mf'_i(0)\overline{\phi_i(0)}.
\end{eqnarray*}
By choosing now $\phi \in D(a)$ such that $\phi(0)\neq 0$, one obtains $f\in D$. Hence, $(-A_2,D)=(B,D(B))$.
\end{proof}
Before characterizing the spectrum of $A$., we need a preparatory lemma. The following seems to be folklore, but we could not find an appropriate reference in the literature. Because $\R\simeq \mathcal S_2$, with a slight abuse of notation we still denote by $A_2$ the realization of the Ornstein--Uhlenbeck operator on $L^2_{\mu}(\R)\simeq L^2_{\mu_2}(\mathcal S_2)$.
\begin{lemma}\label{lem:spectrumaevenodd}
The realization of the Ornstein--Uhlenbeck operator on $L^2_{\mu_1}(\R_+)$
has purely point spectrum given by
\[
\begin{cases}
\{-2k:k\in \N_0\},\quad &\hbox{if Neumann conditions are imposed at 0},\\
\{-2k-1:k\in \N_0\},\quad &\hbox{if Dirichlet conditions are imposed at 0}.\\
\end{cases}
\]
\end{lemma}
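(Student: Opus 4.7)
My plan is to exploit the symmetry of the real line. The classical Ornstein--Uhlenbeck operator on $L^2_\mu(\R)$ has compact resolvent and purely discrete spectrum $\{-k:k\in\N_0\}$, with the Hermite polynomials $H_k$ as a complete orthogonal system of eigenfunctions; moreover $H_k(-x)=(-1)^kH_k(x)$, so $H_k$ is even when $k$ is even and odd when $k$ is odd. I will show that the Neumann (respectively Dirichlet) realization on $L^2_{\mu_1}(\R_+)$ is unitarily equivalent to the restriction of the $\R$-operator to its invariant subspace of even (respectively odd) functions, and then read off the spectrum.

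First I would introduce the unitary map
\[
U_\pm:L^2_{\mu_1}(\R_+)\longrightarrow L^2_{\mu,\pm}(\R),\qquad (U_\pm f)(x):=\tfrac{1}{\sqrt 2}f(|x|)\;\hbox{or}\;\tfrac{\sign(x)}{\sqrt 2}f(|x|),
\]
into the even (+) and odd ($-$) subspaces of $L^2_\mu(\R)$, which are orthogonal and whose direct sum is $L^2_\mu(\R)$; the normalizing factor $\tfrac{1}{\sqrt 2}$ absorbs the difference between the densities $\tfrac{1}{\sqrt\pi}e^{-x^2}$ on $\R$ and $\tfrac{2}{\sqrt\pi}e^{-x^2}$ on $\R_+$. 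Since the coefficient $x$ in $\tilde{A}=\tfrac12\partial_x^2-x\partial_x$ is odd, the operator $\tilde A$ preserves parity, so both $L^2_{\mu,+}(\R)$ and $L^2_{\mu,-}(\R)$ are invariant subspaces.

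Next I would check the domain identification. An even function $g\in\widetilde{H^2_\mu}(\R)$ has odd derivative, hence $g'(0)=0$; conversely any $f\in H^2_{\mu_1}(\R_+)$ satisfying $f'(0)=0$ extends evenly to an element of $\widetilde{H^2_\mu}(\R)$. This gives $U_+\bigl(D(A_{2,\mathrm{Neu}})\bigr)=D(\tilde A_2)\cap L^2_{\mu,+}(\R)$ and $U_+^{-1}\tilde A_2 U_+=A_{2,\mathrm{Neu}}$; exactly the same argument, using that an odd $H^2_\mu(\R)$-function automatically satisfies $g(0)=0$, yields $U_-^{-1}\tilde A_2 U_-=A_{2,\mathrm{Dir}}$.

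Finally I would conclude by spectral transfer. Since $\tilde A_2$ has compact resolvent on $L^2_\mu(\R)$, so do $A_{2,\mathrm{Neu}}$ and $A_{2,\mathrm{Dir}}$, and hence each has purely point spectrum. The Hermite polynomials $\{H_{2k}\}_{k\in\N_0}$ form a complete orthogonal system in $L^2_{\mu,+}(\R)$ with $\tilde A_2H_{2k}=-2k\,H_{2k}$, while $\{H_{2k+1}\}_{k\in\N_0}$ play the same role in $L^2_{\mu,-}(\R)$ with eigenvalues $-(2k+1)$. Pulling back by $U_\pm$ gives the claimed spectra. The main point requiring care is the precise domain identification at the boundary and, correspondingly, checking that the unitary $U_\pm$ really intertwines the operators (including the behaviour at $0$); everything else is a direct consequence of the classical Hermite decomposition.
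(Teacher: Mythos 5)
Your proposal is correct and follows essentially the same route as the paper: decompose $L^2_\mu(\R)$ into even and odd parity subspaces, identify the Neumann (resp.\ Dirichlet) realization on $L^2_{\mu_1}(\R_+)$ with the restriction of the Ornstein--Uhlenbeck operator to the even (resp.\ odd) subspace, and read off the spectrum from the parity of the Hermite polynomials. Your version is somewhat more explicit about the unitary intertwiners $U_\pm$ and the boundary-condition bookkeeping, but the substance is identical.
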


\begin{proof}
It is well known~\cite[Theorem~3.1]{MetPalPri02} that the spectrum of
the Ornstein--Uhlenbeck operator on $L^2_\mu (\R)$ consists precisely of the simple eigenvalues $k=0,-1,-2,\ldots$,  and that the corresponding eigenfunctions are given by the Hermite polynomials $H_k$, where
$$H_k(x):=(-1)^k\e^{|x|^2}D^k\e^{-|x|^2},\quad x\in \R,\ k\in \N_0.$$

This information can be reformulated: since we know that $A_2$ leaves invariant the mutually orthogonal subspaces $L^2_{\hbox{\scriptsize odd}}$ and $L^2_{\hbox{\scriptsize even}}$ (of odd and even $L^2_\mu(\R)$-functions, respectively), the spectrum of $A$ can be described as the disjoint union of two subsets: the spectrum of the restrictions of $A$ to $L^2_{\hbox{\scriptsize odd}}$ and $L^2_{\hbox{\scriptsize even}}$. In turn, these restrictions are unitarily equivalent (and isospectral) with the realizations $A_D$ and $A_N$ of the Ornstein--Uhlenbeck operator on $L^2_{\mu_1}(\R_+)$ with Dirichlet and Neumann conditions, respectively.

Since $H_k$ is a polynomial, $AH_k=-kH_k$; furthermore, $H'_k(0)=0$ if and only if is even, whereas $H_k(0)=0$ if and only if $k$ is odd. It follows that $H_k$ is an eigenfunction of $A_N$ whenever $k$ is even and $H_k$ is an eigenfunction of $A_D$ whenever $k$ is odd. This yields the claim.
\end{proof}

We now characterize the spectrum of $A_p$.
\begin{theo}\label{spectrum-A2}
The spectrum of the realization, $A_p$, $p \in (1,\infty)$, of $A$ in $L^p_{\mu_m}({\mathcal S}_m)$ consists of isolated eigenvalues and is independent of $p\in (1,\infty)$. Moreover, 
\[
 \sigma(A_p)=\{-k: k\in \N_0\},\qquad p\in (1,\infty),
 \]
 where all even eigenvalues have multiplicity 1, whereas all odd eigenvalues have multiplicity $m-1$.
\end{theo}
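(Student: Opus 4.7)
The plan is to exploit the permutation symmetry of ${\mathcal S}_m$ in order to decompose $A_p$ into a direct sum of two well-understood operators whose spectra are provided by Lemma~\ref{lem:spectrumaevenodd}. Given $f=(f_i)_{1\le i\le m}\in L^p_{\mu_m}({\mathcal S}_m)$, write $f_i=g+h_i$ with $g:=\tfrac{1}{m}\sum_{j=1}^m f_j$ and $h_i:=f_i-g$, so that $\sum_{i=1}^m h_i\equiv 0$. This yields an $A_p$-invariant direct-sum decomposition $L^p_{\mu_m}({\mathcal S}_m)=V_{\mathrm{sym}}\oplus V_0$, where $V_{\mathrm{sym}}=\{(f_i):f_1=\cdots=f_m\}$ and $V_0=\{(h_i):\sum_i h_i\equiv 0\}$. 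The vertex conditions translate as follows: on $V_{\mathrm{sym}}$ continuity is automatic while Kirchhoff reduces to $mg'(0)=0$, i.e.\ a Neumann condition at $0$; on $V_0$ continuity combined with $\sum_i h_i(0)=0$ forces $h_i(0)=0$ for each $i$, i.e.\ a Dirichlet condition on each edge, and then $\sum_i h_i'(0)=0$ is automatic by differentiating the identity $\sum_i h_i\equiv 0$.

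Consequently, the restriction of $A_p$ to $V_{\mathrm{sym}}$ is isomorphic to the realization $A_N$ of the Ornstein--Uhlenbeck operator on $L^p_{\mu_1}(\R_+)$ with Neumann condition at $0$, while the restriction to $V_0$ is isomorphic to the direct sum of $m-1$ independent copies of the Dirichlet realization $A_D$ on $L^p_{\mu_1}(\R_+)$: indeed, $V_0$ is a free $L^p_{\mu_1}(\R_+)$-module over any basis of the $(m-1)$-dimensional subspace $\{v\in \R^m:\sum_i v_i=0\}$, and all Kirchhoff data have already been absorbed into the Dirichlet condition. Combining this with Lemma~\ref{lem:spectrumaevenodd} and the compactness of $S_m(t)$ on $L^p_{\mu_m}({\mathcal S}_m)$ recorded in Remark~\ref{rem:Lp} (which ensures that $\sigma(A_p)$ is purely discrete, consisting of eigenvalues of finite algebraic multiplicity), one obtains $\sigma(A_p)=\{-k:k\in\N_0\}$ together with the correct multiplicities: each even eigenvalue $-2k$ comes only from the Neumann summand and is therefore simple, while each odd eigenvalue $-2k-1$ comes from the $m-1$ Dirichlet summands and therefore has multiplicity $m-1$.

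The step expected to require the most care is the $p$-independence of the spectrum. The Hermite-polynomial eigenfunctions underlying Lemma~\ref{lem:spectrumaevenodd} are genuine polynomials, hence they belong to $L^p_{\mu_1}(\R_+)$ for every $p\in(1,\infty)$; lifting them through the symmetric/antisymmetric decomposition above produces eigenfunctions of $A_p$ that lie in $D(A_p)\subset L^p_{\mu_m}({\mathcal S}_m)$ for every such $p$, so the inclusion $\{-k:k\in\N_0\}\subseteq\sigma(A_p)$ is immediate. The reverse inclusion relies on the consistency of the family $(S_m(t))_{t\ge 0}$ across the $L^p_{\mu_m}$-scale together with its compactness for every $p\in(1,\infty)$ (Remark~\ref{rem:Lp}): any eigenprojection of $A_p$ obtained via a Dunford integral of the resolvent agrees with the corresponding eigenprojection of $A_2$ on the common polynomial core, so no extra eigenvalue can appear. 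Making this last transfer rigorous is the delicate point, and here I would follow the strategy of Metafune--Pallara in \cite{MetPalPri02}, now read through our direct-sum decomposition.
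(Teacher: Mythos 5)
Your proposal is correct and follows essentially the same route as the paper: a $p$-independence argument via consistent resolvents, compactness, and spectral projections tested against a dense subspace, combined with a symmetric/antisymmetric decomposition that reduces the problem to the Neumann and Dirichlet realizations of the Ornstein--Uhlenbeck operator on $L^2_{\mu_1}(\R_+)$ from Lemma~\ref{lem:spectrumaevenodd}. The only (cosmetic) difference is that you split the odd subspace using a real basis of $\{v\in\R^m:\sum_i v_i=0\}$, whereas the paper diagonalizes the cyclic shift $R$ and uses the eigenspaces $E_j=(1,z^j,\ldots,z^{j(m-1)})\otimes L^2_{\mu_m}(\R_+)$ with $z=\e^{2\pi i/m}$; both yield the same multiplicity $m-1$ for the odd eigenvalues.
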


\begin{proof}
By Remark \ref{rem:Lp}, we know that $S_m(t)$ is compact in $L^p_{\mu_m}({\mathcal S}_m)$ for any $t>0$. Hence, the spectrum $\sigma(A_p)$ of $A_p$ consists of a sequence of eigenvalues. By standard arguments, see the proof of \cite[Proposition 2.10]{LorRha15},
one deduces that $\sigma(A_p)$ is independent of $p$, cf.~\cite[Section~7.2.2]{Are04} . Anyway for the reader’s convenience we give some details. From Remark \ref{rem:Lp}, we know that $R(\lambda , A_p) = R(\lambda , A_q )$ on $L^p_{\mu_m}({\mathcal S}_m)\cap L^q_{\mu_m}({\mathcal S}_m)$ for any $\lambda > 0$. Since $\sigma(A_p)$ and $\sigma(A_q )$ consist of isolated eigenvalues,
$\mathbb{C} \setminus (\sigma (A_p) \cup \sigma(A_q ))$ is a connected open set in $\mathbb{C}$. Hence, $R(\lambda , A_p) = R(\lambda , A_q )$ on
$L^p_{\mu_m}({\mathcal S}_m)\cap L^q_{\mu_m}({\mathcal S}_m)$ for any $\lambda \in \mathbb{C} \setminus (\sigma (A_p) \cup \sigma(A_q ))$.

Let us now fix $\lambda_0\in \sigma(A_p)$. So, $\lambda_0$ is isolated in $\sigma(A_p)\cup \sigma(A_q)$. Thus, there is $\varepsilon >0$ small enough such that $B_\varepsilon (\lambda_0)\setminus \{\lambda_0\}\subset \mathbb{C} \setminus (\sigma (A_p) \cup \sigma(A_q ))$. Let $P$ be the spectral projection associated with $\lambda_0$, which is defined by
$$Pf=\frac{1}{2\pi i}\int_{\partial B_\varepsilon(\lambda_0)^+}R(\lambda , A_p)f \ud x,\quad f\in L^p_{\mu_m}({\mathcal S}_m).$$
If $\lambda_0\not\in \sigma(A_q)$, then we have
$$P\varphi =\frac{1}{2\pi i}\int_{\partial B_\varepsilon(\lambda_0)^+}R(\lambda , A_q)\varphi \ud x=0$$
for all $\varphi \in C_c^\infty ({\mathcal S}_m)$. Thus, by density, $P\equiv 0$, which is a contradiction. Therefore, $\sigma(A_p)\subset \sigma(A_q)$ and hence $\sigma(A_p)= \sigma(A_q)$, since $p$ and $q$ have been arbitrarily fixed. In particular we have $\sigma(A_p)=\sigma(A_2)$.

Let us now turn to the task of describing the spectrum of $A_2$ on $L^2_{\mu_m}(\mathcal S_m)$. We will adapt a method which the first-named author has learned from Pavel Kurasov: it was, e.g., already used in~\cite[Section~3.5]{Mal13} to solve the 
problem of determining the spectrum of the Laplacian with natural vertex conditions on equilateral star graphs.

To begin with, we observe that $A_2$ leaves invariant the the mutually orthogonal subspaces $L^2_{\hbox{\scriptsize odd}}$ and $L^2_{\hbox{\scriptsize even}}$ of odd and even $L^2_{\mu_m}({\mathcal S}_m)$-functions, respectively\footnote{ By a slight abuse of notation, we are thereby calling a function $f:{\mathcal S}_m\to \R$ 
\begin{itemize}
\item \textit{even}, if $f(x_i)=f(x_j)$ for all $i,j=1,\ldots,m$,
\item \textit{odd}, if $f(x_1)+\ldots+f(x_m)=0$.
\end{itemize}
}:  up to minor modifications, this can be proved as in~\cite[Proposition~6.88]{Mug14}.
In fact, more is true: if we denote by $R$ the bounded, unitary operator on $L^2_{\mu_m}(\mathcal S_m)$ defined by
\[
R:(f_1,\ldots,f_{m-1},f_m)\mapsto (f_2,\ldots,f_m,f_1),
\]
then it is easy to see that $R$ commutes with $A_2$. Accordingly, by the Spectral Theorem for normal operators $A_2$ and $R$ can be simultaneously diagonalized: i.e., any eigenfunction of $A_2$ turns out to be an eigenfunction of $R$, and vice versa. So, what are the eigenfunctions of $R$? Observe that $R^m$ is the identity operator of $L^2_{\mu_m}(\mathcal S_m)$, so its eigenvalues are precisely the $m$-th roots of unity: $\e^{\frac{2j\pi i}{m}}$, $j=0,\ldots,m-1$. A direct computation shows that the corresponding (infinite-dimensional) $j$-th eigenspace of $R$ is
\[
E_j:=(1,z^j,z^{2j},\ldots,z^{j(m-1)})\otimes L^2_{\mu_m}(\R_+),
\]
where $z:=\e^{\frac{2\pi i}{m}}$. Observe that the eigenspace $E_0$ agrees with the space $L^2_{\hbox{\scriptsize even}}$ of even $L^2_{\mu_m}(\mathcal S_m)$-functions, whereas the remaining $m-1$ eigenspaces of $R$ consist of odd functions: we conclude that $L^2_{\hbox{\scriptsize odd}}=\bigoplus_{j=1}^{m-1}E_j$.

Let us first study the restriction of $A_2$ to $L^2_{\hbox{\scriptsize odd}}$, or equivalently $\bigoplus_{j=1}^{m-1} A_{|E_j}$:  for continuous functions on $\mathcal S_m$ (like those in $D(A_2)$), oddness induces, by
\[
0=\sum_{i=1}^m f(0_i)=mf(0),
\]
 Dirichlet boundary conditions at 0: we conclude that the spectrum of $A_{|E_j}$ is 
 \[
 \{-2k-1:k\in\N_0\}\quad\hbox{ for all }j=1,\ldots,m-1.
 \]

Likewise, the restriction of $A_2$ to the space $E_0$ of even functions on $\mathcal S_m$ is isomorphically equivalent, hence isospectral, with the realization $A_N$ of the Ornstein--Uhlenbeck operator on $L^2_{\mu_1}(\R_+)$ with Neumann conditions at 0: we already know from Lemma~\ref{lem:spectrumaevenodd}
that the corresponding eigenvalues form the set
\[
\{-2k:k\in \N_0\}.
\]
This concludes the proof.
\end{proof}

We have just seen that the Ornstein--Uhlenbeck semigroup generated by $A_2$ in $L^2_{\mu_m}(\mathcal S_m)$ is compact. In fact, more can be said.
\begin{prop}\label{prop:trace}
The Ornstein--Uhlenbeck semigroup generated by $A_2$ on $L^2_{\mu_m}(\mathcal S_m)$ is of trace class.
\end{prop}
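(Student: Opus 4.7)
The plan is to exploit the spectral information established in Theorem~\ref{spectrum-A2} together with the self-adjointness of $A_2$ on $L^2_{\mu_m}(\mathcal{S}_m)$. Indeed, Proposition~\ref{domain-A2} realizes $A_2$ as the operator associated with the densely defined, symmetric, closed, accretive form $a$, so $A_2$ is self-adjoint and non-positive. Consequently, $S_m(t)=\e^{tA_2}$ is a self-adjoint, positive operator on $L^2_{\mu_m}(\mathcal{S}_m)$ for every $t>0$. For such an operator, belonging to the trace class is equivalent to having a summable eigenvalue sequence.

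First I would invoke Theorem~\ref{spectrum-A2}, which gives $\sigma(A_2)=\{-k:k\in\N_0\}$, with each even eigenvalue $-2j$ having multiplicity $1$ and each odd eigenvalue $-(2j+1)$ having multiplicity $m-1$. By the Spectral Mapping Theorem for bounded analytic functional calculus (or simply by the fact that $A_2$ and $S_m(t)$ are simultaneously diagonalizable), the eigenvalues of $S_m(t)$ are $\e^{-kt}$ with the same multiplicities.

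Next I would compute the trace as an absolutely convergent double geometric series:
\[
\mathrm{tr}\, S_m(t)=\sum_{j=0}^{\infty}\e^{-2jt}+(m-1)\sum_{j=0}^{\infty}\e^{-(2j+1)t}=\frac{1+(m-1)\e^{-t}}{1-\e^{-2t}},\qquad t>0,
\]
which is finite for every $t>0$. Since $S_m(t)$ is a positive self-adjoint operator, finiteness of $\sum_n \langle S_m(t)e_n,e_n\rangle$ on an orthonormal basis of eigenvectors is exactly the definition of trace class, so the claim follows.

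The only subtle point I anticipate is ensuring the eigenvectors of $A_2$ form an orthonormal basis of $L^2_{\mu_m}(\mathcal{S}_m)$, so that no continuous spectrum or missing multiplicity is overlooked; but this is automatic, since by Remark~\ref{rem:Lp} the resolvent of $A_2$ is compact (as a consequence of the compact embedding $W^{1,2}_{\mu_m}(\mathcal{S}_m)\hookrightarrow L^2_{\mu_m}(\mathcal{S}_m)$ combined with the form domain inclusion), and a self-adjoint operator with compact resolvent is diagonalized by an orthonormal basis of its eigenvectors. Hence the computation above indeed captures the full trace.
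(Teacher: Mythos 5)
Your argument is correct and rests on the same ingredient as the paper's proof, namely the explicit eigenvalue list with multiplicities from Theorem~\ref{spectrum-A2}, combined with self-adjointness and compactness so that an orthonormal eigenbasis exists. The route differs slightly: the paper observes that the eigenvalues of $(\lambda-A_2)^{-1}$ are square summable, concludes that the resolvent is Hilbert--Schmidt, and then deduces the trace class property of the semigroup (implicitly via the factorization $S_m(t)=S_m(t/2)S_m(t/2)$ with each factor Hilbert--Schmidt), whereas you sum the semigroup eigenvalues $\e^{-kt}$ directly and obtain the explicit value
\[
\mathrm{tr}\, S_m(t)=\frac{1+(m-1)\e^{-t}}{1-\e^{-2t}},\qquad t>0.
\]
Your version is more self-contained and yields the trace formula as a bonus; the paper's version isolates the Hilbert--Schmidt property of the resolvent, which is of independent interest (and is the form in which the $m=2$ case is stated in the cited reference of Metafune--Pallara). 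One small point worth making explicit in your write-up is the passage from $\sigma(A_2)$ to the eigenvalues of $S_m(t)$: since $A_2$ is self-adjoint with compact resolvent, $L^2_{\mu_m}(\mathcal S_m)$ decomposes as the orthogonal sum of the eigenspaces $\ker(A_2+k)$ and $S_m(t)$ acts as $\e^{-kt}$ on each, which is exactly the simultaneous diagonalization you invoke; no appeal to a general spectral mapping theorem is needed.
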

\begin{proof}
It suffices to observe that the $L^2_{\mu_m}(\mathcal S_m)$-eigenvalues of $(\lambda -A_2)^{-1},\,\lambda >0,$ are square summable for all $m\in \N$. Accordingly, $A_2$ has Hilbert--Schmidt resolvent, and the trace class property of the semigroup follows.
\end{proof}
In the case of $m=2$, the assertion in Proposition~\ref{prop:trace} is a direct consequence of \cite[Theorem~3.3]{MetPal00}.


\section{The harmonic oscillator}\label{sec:harmonic}

Let us discuss a further example: the harmonic oscillator 
\begin{equation}\label{eq:opou-ho}
B f(x_i)= \frac12 \left(f''(x_i) -|x_i|^2 f(x_i)+f(x_i)\right), \qquad |x_i|\ge 0, \quad  i=1,\ldots, m, 
\end{equation}
on the star graph ${\mathcal S}_m$, again with Kirchhoff-type conditions in $0$, i.e.,
\begin{equation}\label{eq:domou-ho}
D(B)=\{f \in C_b({\mathcal S}_m) \cap  \bigcap_{1\leq p < \infty} \widetilde{W^{2,p}_{\hbox{\scriptsize loc}}}({\mathcal S}_m):  \sum_{i=1}^m f'(0_i)=0 \hbox{ and } Bf \in C_b({\mathcal S}_m)\}. 
\end{equation}
For $m=2$ we have the classical harmonic oscillator on $\R$: we refer to~\cite[Section~4.3]{Dav89} for basic facts about it. In particular, it is known that $B$ generates on $L^2(\R)$ an ultracontractive semigroup whose heat kernel is given by
\begin{equation}\label{eq:Mehler-HO}
k(t,x,y):=\frac{1}{\sqrt{\pi(1-\e^{-2 t})}}\e^{\frac{4xy\e^{-t}-(x^2+y^2)(1+\e^{-2t})}{2(1-\e^{-2t})}}
\end{equation}
by the celebrated Mehler formula.	

Applying Theorem \ref{main-thm} with $q(x)=\frac{1}{2},\,b(x)=0$ and $c(x)=-\frac{1}{2}(x^2-1)$ and \eqref{eq:Mehler-HO} we deduce the following.
\begin{cor}
For every function $f \in C_b({\mathcal S}_m)$, there exists a unique classical solution $u$ of $({\rm P}_B)$ given by the semigroup $U_m(\cdot)$
\begin{equation}\label{semiU} 
\begin{split}
u(t,x_i)&= (U_m(t)f)(x_i)\\
&= \frac{2}{\sqrt{\pi(1-\e^{-2t})}}\int_{(\R_+,i)} \Big ( \exp\left[-\frac{(1+\e^{-2t})(|x_i|^2+|y_i|^2)}{2(1-e^{-2t})}\right] \sinh\left(\frac{2|x_i||y_i|e^{-t}}{1-e^{-2t}}\right) f(y_i)\,\ud y_i \\
&\qquad  + \frac{2}{m\sqrt{\pi(1-\e^{-2t})}}\sum_{\substack{1\le j\le m}}
 \int_{(\R_+,j)} \e^{\frac{-4|x_i|y_j|\e^{-t}-(|x_i|^2+|y_j|^2)(1+\e^{-2t})}{2(1-\e^{-2t})}}f(y_j)\,\ud y_j
\end{split}
\end{equation}
for $1\le i\le m$.
\end{cor}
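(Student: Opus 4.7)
The plan is to apply Theorem~\ref{main-thm} directly, identifying the coefficients of $B$ with those of the abstract operator $L$ and then specializing the general formula \eqref{semiT} via the Mehler kernel \eqref{eq:Mehler-HO}.

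First, I would verify the hypotheses. Setting $q(x)=\frac{1}{2}$, $b(x)=0$, $c(x)=\frac{1}{2}(1-x^2)$, all three coefficients lie in $C^\alpha_{\mathrm{loc}}([0,\infty))$ for any $\alpha\in(0,1)$, we have $q>0$ on $[0,\infty)$, $b(0)=0$ trivially, and $\sup c=\frac{1}{2}=:c_0$. Moreover, since $b\equiv 0$ and both $q,c$ are even in $x$, the even/odd extensions prescribed in \eqref{eq:extention-q} yield exactly $\tilde{q}=\frac{1}{2}$, $\tilde{b}=0$, $\tilde{c}(x)=\frac{1}{2}(1-x^2)$ on all of $\R$, so that $\tilde L$ is the classical harmonic oscillator on $\R$.

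Second, uniqueness for $({\rm P}_{\tilde L})$ in $C_b(\R)$ is well-known: the harmonic oscillator on $\R$ generates a $C_0$-semigroup with integral kernel given exactly by the Mehler formula \eqref{eq:Mehler-HO}, and classical bounded solutions of the corresponding Cauchy problem are unique (see, e.g., \cite[Section~4.3]{Dav89}). Hence the hypotheses of the uniqueness part of Theorem~\ref{main-thm} are met, and we obtain a unique classical solution $u$ on $\mathcal{S}_m$, represented by formula \eqref{semiT} with $k$ replaced by the Mehler kernel.

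Third, I would carry out the short computation to bring \eqref{semiT} into the form \eqref{semiU}. For the diagonal (``same-edge'') contribution, using \eqref{eq:Mehler-HO}, the factor common to $k(t,|x_i|,|y_i|)$ and $k(t,|x_i|,-|y_i|)$ is $\frac{1}{\sqrt{\pi(1-\e^{-2t})}}\exp\!\bigl[-\tfrac{(|x_i|^2+|y_i|^2)(1+\e^{-2t})}{2(1-\e^{-2t})}\bigr]$, while the remaining exponentials differ only by the sign of $\frac{4|x_i||y_i|\e^{-t}}{2(1-\e^{-2t})}$; their difference is therefore $2\sinh\!\bigl(\frac{2|x_i||y_i|\e^{-t}}{1-\e^{-2t}}\bigr)$, giving the first integral in \eqref{semiU}. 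For the off-diagonal contribution, $k(t,|x_i|,-|y_j|)$ is obtained by flipping the sign of $y$ in \eqref{eq:Mehler-HO}, which yields precisely the integrand appearing in the second sum of \eqref{semiU} after multiplication by $\frac{2}{m}$.

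The argument is essentially a direct application of the general machinery; the only mildly delicate point is recording the uniqueness of $({\rm P}_{\tilde L})$ on $C_b(\R)$, which is standard but not proved in this paper. No further obstacle is expected, so the corollary follows.
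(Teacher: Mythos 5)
Your proposal is correct and follows essentially the same route as the paper, which simply applies Theorem~\ref{main-thm} with $q\equiv\frac12$, $b\equiv 0$, $c(x)=-\frac12(x^2-1)$ and the Mehler kernel \eqref{eq:Mehler-HO}; your verification of the hypotheses, the even/odd extension check, and the $\sinh$ computation are exactly the details the paper leaves implicit. The only point you flag --- uniqueness for $({\rm P}_{\tilde L})$ on $C_b(\R)$ --- is likewise taken for granted in the paper and is indeed standard for the harmonic oscillator.
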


As in the previous section we describe the realization $B_2$ of the harmonic oscillator $B$ in 
\begin{eqnarray*}
& & L^2({\mathcal S}_m)=\bigoplus_{i=1}^mL^2(\R_+,i)\,\hbox{\ endowed with the norm } \\
& & \|f\|_{L^2({\mathcal S}_m)}^2=\sum_{i=1}^m\|f_i\|_{L^2(\R_+,i)}^2\,\hbox{\ for }f=(f_i)_{1\le i\le m}.
\end{eqnarray*}
To this purpose we consider the isometry
\begin{eqnarray*}
& & T : L^2_{\mu_m}({\mathcal S}_m)\to L^2({\mathcal S}_m)\\
& &\quad \quad \quad \quad \quad f\mapsto (\sqrt{c_m}\e^{-\frac{x^2}{2}}f_i),
\end{eqnarray*}
where $c_m:=\frac{2}{m\sqrt{\pi}}$. An easy computation shows that
$B=TAT^{-1}$ and so by Proposition \ref{domain-A2} and Theorem \ref{spectrum-A2} we have the following result.
\begin{prop}\label{domain-B2}
The generator $B_2$ of the harmonic oscillator semigroup $(U_m(\cdot)$ on $L^2({\mathcal S}_m)$ is given by
\[
\begin{aligned}
D(B_2) &=\left\{f\in \widetilde{H^2}({\mathcal S}_m): f_i(0)=f_j(0) \hbox{\ for all }i,j=1,\ldots, m \hbox{\ and }\sum_{i=1}^mf'_i(0)=0\right\}\\
(B_2f)_i(x) &= \frac{1}{2}(f''_i(x)-x^2f_i(x)+f_i(x)),\quad\hbox{for all } f=(f_i)_{1\le i\le m}\in D(B_2).
\end{aligned}
\]
Moreover, $U_m(\cdot)=TS_m(\cdot)T^{-1}$ and 
\[
 \sigma(B_2)=\{-k: k\in \N_0\},
 \]
 where all even eigenvalues have multiplicity 1, whereas all odd eigenvalues have multiplicity $m-1$.
\end{prop}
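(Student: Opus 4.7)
The plan is to reduce everything to Proposition~\ref{domain-A2} and Theorem~\ref{spectrum-A2} via a ground-state transformation implemented by the multiplication operator $T$. First I would verify that $T$ is a surjective isometry from $L^2_{\mu_m}(\mathcal S_m)$ onto $L^2(\mathcal S_m)$: for $f=(f_i)_{1\le i\le m}\in L^2_{\mu_m}(\mathcal S_m)$ one has
\[
\|Tf\|^2_{L^2(\mathcal S_m)}=\sum_{i=1}^m\int_0^\infty c_m\e^{-x^2}|f_i(x)|^2\ud x=\|f\|^2_{L^2_{\mu_m}(\mathcal S_m)}
\]
because $c_m=\frac{2}{m\sqrt{\pi}}$ is exactly the density of $\mu_m$. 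Its inverse is $T^{-1}g=(c_m^{-1/2}\e^{x^2/2}g_i)$.

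Next I would establish the pointwise identity $B=TAT^{-1}$ by direct differentiation. Writing $f_i=c_m^{-1/2}\e^{x^2/2}g_i$, a short computation gives $f_i''=c_m^{-1/2}\e^{x^2/2}\bigl(g_i''+2xg_i'+(x^2+1)g_i\bigr)$ and $f_i'=c_m^{-1/2}\e^{x^2/2}(xg_i+g_i')$, so that
\[
(Af)_i=\tfrac{1}{2}f_i''-xf_i'=c_m^{-1/2}\e^{x^2/2}\cdot\tfrac{1}{2}\bigl(g_i''-x^2g_i+g_i\bigr)=c_m^{-1/2}\e^{x^2/2}(Bg)_i.
\]
Multiplying by $\sqrt{c_m}\e^{-x^2/2}$ yields $(TAT^{-1}g)_i=(Bg)_i$, confirming the similarity on the smooth level.

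The delicate point, which I expect to be the main obstacle, is to show that $T$ maps $D(A_2)$ bijectively onto the set $D$ defined in the statement. The multiplier $\sqrt{c_m}\e^{-x^2/2}$ is smooth and strictly positive on each $\R_+$, so $f_i\in H^2_{\mu_m}(\R_+,i)$ if and only if $(Tf)_i\in H^2(\R_+,i)$; hence $T$ maps $\widetilde{H^2_{\mu_m}}(\mathcal S_m)$ onto $\widetilde{H^2}(\mathcal S_m)$. At the vertex the weight takes the common value $\sqrt{c_m}$ on every edge, so continuity $f_i(0)=f_j(0)$ is preserved. Moreover $(Tf)'_i(0)=\sqrt{c_m}\bigl(f'_i(0)-0\cdot f_i(0)\bigr)=\sqrt{c_m}f'_i(0)$, so the Kirchhoff condition $\sum_i f'_i(0)=0$ transforms into the identical condition on $Tf$. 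Combined with Proposition~\ref{domain-A2} this gives the asserted description of $D(B_2)$ and the expression for $B_2 f$.

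Finally, the similarity $B_2=TA_2T^{-1}$ upgrades at the semigroup level to $U_m(\cdot)=TS_m(\cdot)T^{-1}$ (which one may also cross-check against the explicit kernel \eqref{semiU} via the Mehler identity), and any similarity by a unitary preserves the spectrum together with the algebraic and geometric multiplicities of each eigenvalue. Invoking Theorem~\ref{spectrum-A2} then yields $\sigma(B_2)=\{-k:k\in\N_0\}$ with the claimed multiplicity pattern, completing the proof.
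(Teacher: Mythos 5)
Your proposal is correct and follows essentially the same route as the paper, which merely observes that $T$ is a unitary isometry with $B=TAT^{-1}$ and then invokes Proposition~\ref{domain-A2} and Theorem~\ref{spectrum-A2}; your conjugation computation and the transfer of the continuity and Kirchhoff conditions at the vertex are exactly the intended argument, spelled out in more detail than the paper provides. The only caveat is that ``smooth and strictly positive'' alone does not justify that $T$ maps $\widetilde{H^2_{\mu_m}}({\mathcal S}_m)$ onto $\widetilde{H^2}({\mathcal S}_m)$ (the multiplier's derivatives are unbounded, so one needs the standard Gaussian--Sobolev fact that multiplication by $x$ is controlled by the $H^1_{\mu_m}$-norm), but this point is equally taken for granted in the paper.
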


\section*{Acknowledgements}
The authors would like to thank Marvin Plümer (Hagen) for interesting discussions.

\end{document}